\let\oldmarginpar\marginpar 
\renewcommand\marginpar[1]{\-\oldmarginpar{\raggedright\small\sf #1}}
\newcommand{\nc}{\newcommand}
\nc{\rnc}{\renewcommand}
\nc{\bs}{\backslash}
\nc{\te}{\otimes}
\nc{\lf}{\lfloor} 
\nc{\rf}{\rfloor}
\nc{\lc}{\lceil}  
\nc{\rc}{\rceil}
\nc{\lr}{\longrightarrow}
\nc{\sr}{\stackrel}
\nc{\dar}{\dashrightarrow}
\nc{\thra}{\twoheadrightarrow}
\nc{\la}{\langle}
\nc{\ra}{\rangle} 
\nc{\ms}{\mathscr}
\nc{\mc}{\mathcal}
\nc{\mb}{\mathbb}
\nc{\mf}{\mathbf}
\nc{\mr}{\mathrm}
\nc{\mg}{\mathfrak}
\nc{\bP}{\mathbb{P}}
\rnc{\P}{\mathbb{P}}
\nc{\Q}{\mathbb{Q}}
\nc{\Z}{\mathbb{Z}}
\nc{\C}{\mathbb{C}}
\nc{\R}{\mathbb{R}}
\nc{\A}{\mathbb{A}}
\nc{\V}{\mathbb{V}}
\nc{\W}{\mathbb{W}}
\nc{\N}{\mathbb{N}}
\nc{\D}{\mathbb{D}}
\nc{\G}{\mathbb{G}}
\nc{\F}{\mathbb{F}}
\nc{\qb}{\overline{\mathbb{Q}}}
\nc{\ssA}{\mathsf{A}}
\nc{\del}{\partial}
\nc{\wt}{\widetilde}
\nc{\wh}{\widehat}
\nc{\ov}{\overline}
\nc{\un}{\underline}
\nc{\naive}{\!\sim_n}
\nc{\Spec}{\mr{Spec}}
\nc{\omx}{\omega_X}
\nc{\ep}{\epsilon}
\nc{\ve}{\varepsilon}
\nc{\vt}{\vartheta}
\nc{\rhobar}{\overline{\rho}}
\rnc{\l}{\lambda}
\rnc{\k}{\kappa}
\nc{\ovl}{\ov{\lambda}}
\nc{\vl}{\mb{V}_{\ovl}}
\nc{\dl}{\mb{D}_{\ovl}}
\nc{\mnb}{\ov{\mr{M}}_{0,n}}
\nc{\mn}{\mr{M}_{0,n}}
\nc{\mel}{\ov{\mr{M}}_{1,1}}
\nc{\mfb}{\ov{\mr{M}}_{0,4}}
\nc{\mof}{\mr{M}_{0,4}}
\nc{\mgnb}{\ov{\mr{M}}_{g,n}}
\nc{\mgn}{\ov{\mr{M}}_{g,n}}
\nc{\omc}{\ov{\mr{M}}}
\rnc{\sl}{\shoveleft}
\nc{\res}{\operatorname{Res}}
\nc{\pic}{\operatorname{Pic}}
\nc{\spec}{\operatorname{Spec}}
\nc{\im}{\operatorname{Im}}
\nc{\Gal}{\operatorname{Gal}}
\nc{\fr}{\operatorname{Fr}}
\nc{\ed}{\operatorname{ed}}
\nc{\rank}{\operatorname{rank}}
\nc{\h}{\operatorname{H}}
\nc{\ch}{\operatorname{char}}
\nc{\sw}{\operatorname{sw}}
\nc{\rsw}{\operatorname{rsw}}
\nc{\Hom}{\operatorname{Hom}}
\nc{\id}{\operatorname{id}}
\nc{\Ad}{\operatorname{Ad}}
\nc{\cO}{\mathcal{O}}
\nc{\Mor}{\operatorname{Mor}}
\nc{\Per}{\operatorname{Per}}
\nc{\prep}{\operatorname{Prep}}
\nc{\End}{\operatorname{End}}
\nc{\Orb}{\operatorname{Orb}}
\nc{\Aut}{\operatorname{Aut}}
\nc{\Out}{\operatorname{Out}}
\nc{\tr}{\operatorname{Tr}}
\nc{\GL}{\operatorname{GL}}
\nc{\SL}{\operatorname{SL}}
\nc{\Frob}{\operatorname{Frob}}
\nc{\Br}{\operatorname{Br}}
\nc{\inv}{\operatorname{inv}}
\nc{\chr}{\operatorname{char}}
\nc{\br}{\bar{\rho}}
\nc{\ideala}{\mathfrak{a}}
\nc{\m}{\mathfrak{m}}
\nc{\primep}{\mathfrak{p}}
\nc{\primeq}{\mathfrak{q}}
\renewcommand{\sl}{\mathfrak{sl}}
\newtheorem{thm}{Theorem}
\newtheorem{prop}[thm]{Proposition}
\theoremstyle{definition}
\newtheorem{rem}[thm]{Remark}
\numberwithin{equation}{section}
\title{Abelian Varieties with Isogenous Reductions}
\begin{document}

\author[C. B. ~Khare]{Chandrashekhar  B.  Khare}
\address{UCLA Department of Mathematics, Box 951555, Los Angeles, CA 90095, USA}
\email{shekhar@math.ucla.edu}
\author[Michael Larsen]{Michael Larsen}
\address{Department of Mathematics, Indiana University, Bloomington, IN}
\email{mjlarsen@indiana.edu}

\thanks{ML was partially supported by NSF grant  DMS-2001349.}

\begin{abstract}
Let $A_1$ and $A_2$ be abelian varieties over a number field $K$.
We prove that if there exists a non-trivial morphism of abelian varieties 
between reductions of $A_1$ and $A_2$ at a sufficiently high percentage of primes, then there exists a non-trivial morphism $A_1\to A_2$ over $\bar K$.
Along the way, we give an upper bound for the number of components of a reductive subgroup of $\GL_n$ whose intersection with the union
of  $\Q$-rational conjugacy classes of $\GL_n$
is Zariski-dense.  This can be regarded as a generalization of the Minkowski-Schur theorem on faithful representations of finite groups with rational characters.

\vskip 10pt
\noindent\textsc{R\'esum\'e}.
Soient $A_1$ et $A_2$ deux vari\'et\'es ab\'eliennes sur un corps de nombres $K$.
Nous montrons que, s'il existe un morphisme non trivial de vari\'et\'es ab\'eliennes entre r\'eductions de $A_1$ et $A_2$   pour une proportion suffisamment grande d'id\'eaux premiers,  il existe un morphisme non trivial $A_1 \to A_2$ sur $\bar K$. Nous donnons \'egalement  une majoration du nombre du composantes d'un sous-groupe r\'eductif de $\GL_n$ dont l'intersection avec l'union des classes de conjugaison $\Q$-rationnelles de $\GL_n$ est  dense pour la topologie de Zariski; c\'est une  g\'en\'eralisation d'un th\'eor\`eme de Minkowski-Schur sur les repr\'esentations fid\`eles des groupes finis a  caract\`ere rationnel.

\end{abstract}

\maketitle

In this note, we answer a recent question of Dipendra Prasad and Ravi Raghunathan \cite[Remark 1]{PR}.
We are grateful to Dipendra Prasad and Jean-Pierre Serre for helpful correspondence. 
We would also like to thank the referee for several improvements and corrections.

Let $K$ be a number field and $A_1$ and $A_2$ abelian varieties over $K$.  If $\wp$ is a prime of $K$, we denote by $k_\wp$ the residue field of $\wp$.
If $\wp$ is a prime of good reduction for $A_i$,
we denote by ${A_i}_\wp$ the reduction and by $\Frob_\wp$ the Frobenius element
regarded as an automorphism, well defined up to conjugacy, of the $\ell$-adic Tate module of $A_i$ or, dually, of $H^1(\bar A_i,\Z_\ell)$.
\begin{thm}
\label{main}
Let $A_1$ and $A_2$ be abelian varieties over a number field $K$.  Suppose that for a density one set of primes $\wp$ of $K$,
there exists a non-trivial morphism of abelian varieties over $\bar k_\wp$ from  ${A_1}_\wp$ to ${A_2}_\wp$.  Then there exists
a non-trivial morphism of abelian varieties from $A_1$ to $A_2$ defined over $\bar K$.
\end{thm}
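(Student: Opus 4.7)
\medskip
\noindent\textbf{Plan of proof.} Let $G_K := \Gal(\bar K/K)$ and write $V_\ell A_i$ for the rational $\ell$-adic Tate module. The plan is to combine Faltings' isogeny theorem with a monodromy-theoretic analysis via Chebotarev density and the Hodge-theoretic structure of Tate modules.

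After replacing $K$ by a finite extension (legitimate since the conclusion is stated over $\bar K$) we may assume that the Zariski closure $H$ of the image of $G_K$ acting on $V_\ell A_1 \oplus V_\ell A_2$ is connected; by Faltings' semisimplicity theorem $H$ is reductive. Faltings' isogeny theorem identifies the $H$-invariants of $W := V_\ell A_2 \otimes V_\ell A_1^*$ with $\Hom_K(A_1, A_2) \otimes \Q_\ell$, so it suffices to show $W^H \ne 0$. On the other hand, Tate's theorem over finite fields translates the hypothesis into the assertion that, for a density-one set of primes $\wp$ of $K$, $\Frob_\wp$ acts on $W$ with some root-of-unity eigenvalue. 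Since Frobenius eigenvalues on $V_\ell A_i$ are Weil numbers of weight one and bounded degree, their ratios have bounded degree over $\Q$; consequently the order of any root of unity arising this way is bounded uniformly in $\wp$ by an integer $N$. Hence the regular function $h \mapsto \det(h^N - \id \mid W)$ on $H$ vanishes on a Zariski-dense subset (by Chebotarev) and therefore identically. Since $H$ is connected reductive, the $N$-th power map on $H$ is surjective (tori are $N$-divisible in characteristic zero, unipotents are $N$-th powers via the exponential, and the two factors of the Jordan decomposition commute), so in fact $1$ is an eigenvalue of every $h \in H$ acting on $W$.

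The remaining step, and the principal obstacle, is to upgrade ``$1$ is an eigenvalue of every $h \in H$ on $W$'' to ``$W^H \ne 0$''. This implication fails for a general reductive representation --- the adjoint representation of any nontrivial reductive group is a counterexample --- so we must invoke structural features of Tate modules of abelian varieties. Specifically, the Hodge cocharacter $\mu_i\colon \G_m \to H$ attached to each $A_i$ acts on $V_\ell A_i$ with weights only in $\{0,1\}$, hence on $W$ with weights in $\{-1,0,1\}$. Combining this weight constraint with the eigenvalue conclusion of the previous paragraph --- restricted to a maximal torus $T$ of $H$, into which, by Serre's theory of Frobenius tori, a density-one set of Frobenii can be conjugated --- forces some $T$-weight of $V_\ell A_1$ to agree with some $T$-weight of $V_\ell A_2$. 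The minuscule, Hodge-theoretic nature of the $H$-representations $V_\ell A_i$ then promotes this shared weight to a shared irreducible $H$-constituent, yielding the required fixed vector in $W$; Faltings then produces the desired nonzero morphism $A_1 \to A_2$ over $\bar K$.
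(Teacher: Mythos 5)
Your proposal is correct in its essential architecture and uses the same key inputs as the paper --- Tate's theorem over finite fields, Pink's result that the monodromy representation is minuscule (this is where Hodge theory enters), and Faltings' isogeny theorem --- but the middle of the argument travels a genuinely different route. You uniformly bound the order of the roots of unity that can appear as Frobenius eigenvalues on $W=\Hom(V_1,V_2)$ (via the bounded degree of Weil numbers), then deduce from Chebotarev--Serre that $\det(h^N-\id\mid W)$ vanishes identically on $H$, and finally use surjectivity of the $N$th power map on the connected group $H$. The paper avoids any uniform bound: it proves (Proposition~\ref{positive}, via the Larsen--Pink/Serre Frobenius-torus machinery) that a positive-density set of primes has Frobenius generating a Zariski-dense subgroup of a maximal torus $T$, intersects that with the density-one set from the hypothesis, and works with a single good prime, noting that $\Frob_\wp^m$ still generates a dense subgroup of the connected $T$. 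Your version buys a slightly more explicit and quantitative statement; the paper's version is shorter and avoids invoking the Weil bounds.

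One step in your writeup is not cleanly justified, and the justification you gesture at is not the right one. You claim that ``the weight constraint'' from the Hodge cocharacter (weights of $W$ lie in $\{-1,0,1\}$) combined with the eigenvalue conclusion ``forces some $T$-weight of $V_\ell A_1$ to agree with some $T$-weight of $V_\ell A_2$.'' In fact the Hodge weight constraint plays no role at this point, and Frobenius tori need not be re-invoked either (you already know $1$ is an eigenvalue of \emph{every} element of $H$ acting on $W$, in particular of every $t\in T$). The actual reason is elementary: the regular function $t\mapsto\det(t-\id\mid W)=\prod_\chi(\chi(t)-1)^{m_\chi}$ on $T$ vanishes identically, and since the coordinate ring of the torus $T$ is an integral domain, one of the factors $\chi-1$ must vanish identically, i.e.\ the trivial character is a weight of $W$, i.e.\ $V_1$ and $V_2$ share a $T$-weight. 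The Hodge-theoretic input you need is precisely Pink's theorem that $V_1,V_2$ are minuscule $H$-representations, which is what promotes the shared weight to a shared irreducible $H$-constituent and hence $W^H\ne 0$; this is exactly the paper's Proposition~\ref{minuscule}. With this correction, your proof is complete.
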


Let $G$ be a connected reductive algebraic group over an algebraically closed field $F$ of characteristic $0$,
and let $V$ be a finite dimensional representation of $G$.  Let $T$ be a maximal torus of $G$ and $W$ the Weyl group of  $G$
with respect to $T$.  If $V$ is irreducible, we say it is \emph{minuscule} if $W$ acts transitively on the weights of $V$ with respect to $T$.
The highest weight of $V$ with respect to any choice of Weyl chamber has multiplicity $1$, so every element of the Weyl orbit has multiplicity one.

For general finite dimensional representations $V$, we say $V$ is minuscule if each of its irreducible factors is so.
Regarding the character of a representation $V$ as a function $f_V$ from $W$-orbits in $X^*(T)$ to non-negative integers,  when $V$ is minuscule,
for any dominant weight $\lambda$, the
multiplicity in $V$ of the irreducible $G$-representation $V_\lambda$ with highest weight $\lambda$ is the value of $f_V$ on the $W$-orbit containing $\lambda$.

\begin{prop}
\label{minuscule}
Let $V_1$ and $V_2$ be minuscule representations of $G$.  If $\dim \Hom_T(V_1,V_2) > 0$, then $\dim \Hom_G(V_1,V_2) > 0$.
\end{prop}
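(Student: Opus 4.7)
The plan is to reduce to the case of irreducible minuscule representations and then argue that a nonzero $T$-equivariant map forces the highest weights to agree. Since $\Hom_T(\cdot,\cdot)$ is additive in both variables, decomposing $V_1 = \bigoplus_i U_i$ and $V_2 = \bigoplus_j U'_j$ into irreducibles, a nonzero $T$-equivariant map $V_1 \to V_2$ must restrict to a nonzero $T$-equivariant map $U_i \to U'_j$ for some $i, j$. If we can produce a nonzero $G$-map $U_i \to U'_j$ from this, then composing with the inclusion and projection yields a nonzero $G$-map $V_1 \to V_2$. So I can assume $V_1, V_2$ are irreducible minuscule, with highest weights $\lambda_1, \lambda_2$.

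Next, I would unpack what a nonzero $T$-map means at the level of weights. Because $V_1$ and $V_2$ are minuscule and irreducible, their weight sets are the full Weyl orbits $W\cdot\lambda_1$ and $W\cdot\lambda_2$, and each weight occurs with multiplicity exactly one. A $T$-equivariant map respects the weight decomposition, so it is nonzero if and only if there is some common weight $\mu \in (W\cdot\lambda_1) \cap (W\cdot\lambda_2)$. Such a $\mu$ means $w_1\lambda_1 = w_2\lambda_2$ for some $w_1, w_2 \in W$, i.e., $\lambda_1$ and $\lambda_2$ lie in the same Weyl orbit.

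Finally, I would invoke the standard fact that the closed dominant Weyl chamber is a fundamental domain for the action of $W$ on the real span of the weight lattice; in particular, two dominant weights that are $W$-conjugate are equal. Since $\lambda_1$ and $\lambda_2$ are both dominant, this forces $\lambda_1 = \lambda_2$, hence $V_1 \cong V_2$ as $G$-representations, so $\dim \Hom_G(V_1, V_2) \geq 1 > 0$. There is no real obstacle here; the content of the proposition is essentially the combination of the weight-multiplicity-one property of minuscule representations with the uniqueness of the dominant representative in a Weyl orbit.
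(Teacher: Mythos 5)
Your proposal is correct and follows essentially the same route as the paper's proof: locate a common weight $\chi$ of $V_1$ and $V_2$, use the Weyl action to take it dominant, and then use the multiplicity-one/single-orbit property of minuscule representations to conclude that the irreducible constituents containing $\chi$ are isomorphic. The only cosmetic difference is ordering (you decompose into irreducibles first and then find the common weight; the paper finds the common weight first and then picks irreducible factors containing it), which does not change the substance.
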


\begin{proof}
If $\dim \Hom_T(V_1,V_2) > 0$, then $V_1$ and $V_2$ must have a common $T$-irreducible factor, and that means they have a common weight $\chi$ with respect to $T$.
If $\lambda$ is the dominant weight in the orbit of $\chi$, then $V_1$ and $V_2$ each contain $V_\lambda$ as a subrepresentation, so 
$\dim \Hom_G(V_1,V_2) > 0$.
\end{proof}

Now let $A_1$ and $A_2$ denote abelian varieties over a number field $K$ with absolute Galois group $G_K := \Gal(\bar K,K)$.
Let $\ell$ be a fixed rational prime, and let $F=\bar \Q_\ell$.  Let $V_i = H^1(\bar A_i,F)$, regarded as $G_K$-modules.
Let $V_{12}:=V_1\oplus V_2$ as $G_K$-module and $G_{12}$  the Zariski closure of $G_K$ in $\Aut_F(V_{12})$.
By the semisimplicity of Galois representations defined by abelian varieties \cite{Faltings}, $G_{12}$ is reductive.
Let $G$ denote the identity component $G_{12}^\circ$.

\begin{prop}
\label{positive}
There exists a positive density set of primes $\wp$ of $K$ such that $A_1\times A_2$ has good reduction at $\wp$, and $\Frob_\wp$ generates a Zariski dense subgroup of a maximal torus of $G$.
\end{prop}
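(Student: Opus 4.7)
The plan is to combine the Chebotarev density theorem with an equidistribution argument in the $\ell$-adic closure of the image of Galois.

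First I would reduce to the case where the ambient group is connected. Since $G_{12}/G$ is a finite group, the classical Chebotarev theorem applied to the finite Galois extension $K'/K$ cut out by the surjection $G_K \twoheadrightarrow G_{12}/G$ produces a positive density (namely $1/|G_{12}/G|$) of primes $\wp$ of $K$ whose Frobenius lies in $G(F)$. After passing to $K'$ I may assume that $G_{12}=G$ is connected and that $\rho(G_K)\subseteq G(F)$ is Zariski dense in $G$.

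Next, let $H\subseteq G(F)$ denote the $\ell$-adic closure of $\rho(G_K)$; this is a compact $\ell$-adic Lie group, Zariski dense in $G$. The strong form of the Chebotarev density theorem, applied to the continuous representation $\rho$, asserts that the Frobenius conjugacy classes are equidistributed in $H$ with respect to its normalized Haar measure $\mu$. In particular, for any proper Zariski closed $Z\subsetneq G$, the intersection $Z(F)\cap H$ is an analytic subset of strictly lower dimension than $H$ and is therefore $\mu$-null, so the density of primes with $\Frob_\wp\in Z(F)$ vanishes.

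Now, $\Frob_\wp$ generates a Zariski dense subgroup of a maximal torus if and only if $\Frob_\wp$ is regular semisimple and no non-trivial character $\chi$ of its unique containing maximal torus sends $\Frob_\wp$ to a root of unity. The first failure is a proper Zariski closed subvariety (the ramification locus of the Chevalley map), hence contributes density zero. The second failure is a countable union, over pairs $(\chi,n)$ with $\chi$ a non-trivial character and $n\ge 1$, of proper Zariski closed subvarieties $B_{\chi,n}\subset G$ (the $G$-invariant extensions of the loci $\chi^n=1$ in a maximal torus), each of which is individually Haar null in $H$ and individually contributes density zero.

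The main obstacle is handling this countable union: in general, density zero for each $B_{\chi,n}$ does not imply density zero for $\bigcup_{\chi,n}B_{\chi,n}$. I would overcome this by combining two observations. First, the eigenvalues of $\Frob_\wp$ on $V_{12}$ are Weil numbers lying in a number field of degree bounded by $\dim V_{12}$, so any root of unity arising as $\chi(\Frob_\wp)$ has order bounded by a constant $N_0$ depending only on $\dim V_{12}$; consequently a ``bad'' $\Frob_\wp$ must satisfy $\Frob_\wp^{N_0!}\in S$ for some proper algebraic subgroup $S$ of $T_{\Frob_\wp}$. Second, the regularity of Haar measure on the compact group $H$ allows us to cover the $\mu$-null bad set by open (even clopen) subsets of arbitrarily small Haar measure, and Chebotarev equidistribution for these open sets gives an arbitrarily small upper bound on the density of bad Frobenii, hence density zero. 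The complementary (good) set therefore carries a positive density of Frobenii, completing the proof.
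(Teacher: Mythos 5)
Your first step (using Chebotarev to restrict to the density-$[G_{12}:G]^{-1}$ subset of primes whose Frobenius lands in the identity component $G$) matches the paper exactly. For the heart of the argument, the paper simply cites two theorems: a result of Serre recorded as \cite[Theorem 1.2]{LP}, which produces a \emph{single} proper closed conjugation-stable subvariety $X\subsetneq G$ containing every Frobenius that fails to generate a Zariski-dense subgroup of a maximal torus, and Serre's equidistribution theorem \cite[Th\'eor\`eme 10]{Serre}, which gives density zero for Frobenii landing in $X$. You correctly identify that the ``naive'' bad locus is a countable union $\bigcup_{\chi,n}B_{\chi,n}$ of proper closed subvarieties, and you correctly pinpoint the bounded degree of Frobenius eigenvalues (Weil numbers) as the extra arithmetic input needed to tame that union; this is precisely the mechanism behind \cite[Theorem 1.2]{LP}.

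However, as written, your two observations do not close the gap. Observation (1) concludes only that a bad $\Frob_\wp$ has $\Frob_\wp^{N_0!}\in S$ for \emph{some} proper algebraic subgroup $S$ of its torus; since there are still infinitely many such $S$, this does not by itself shrink the countable union $\bigcup_{\chi,n}B_{\chi,n}$ to a finite one or to a single proper closed subvariety. Observation (2) then cannot be applied in the form you state: outer regularity lets you cover a \emph{closed} $\mu$-null set by a clopen set of small Haar measure (since in a profinite group a closed set is a decreasing intersection of clopen sets), but the full bad locus is only $F_\sigma$, and an $F_\sigma$ null set can be dense; a dense set has no clopen cover other than $H$ itself, and for a merely open cover $U$ the Chebotarev equidistribution only gives a \emph{lower} bound $\liminf \ge \mu(U)$ on the density of Frobenii in $U$, not the upper bound you need. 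What is missing is the actual content of Serre's theorem \cite[Theorem 1.2]{LP}: using the bounded-degree condition to show that the entire bad locus (intersected with the bounded-degree elements) is already contained in a \emph{single proper Zariski closed} conjugation-stable subvariety, after which your Haar-regularity-plus-Chebotarev argument does apply. So your plan has the right ingredients but leaves the key finiteness step unjustified.
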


\begin{proof}
The condition that $\Frob_{\wp}$ lies in the identity component $G$ has density $[G_{12}:G]^{-1} > 0$.  By a theorem of Serre \cite[Theorem 1.2]{LP}, there exists a proper closed, conjugation-stable subvariety $X$ of
$G$ such that $\Frob_\wp\in G\setminus X$ implies that $\Frob_\wp$ generates a Zariski-dense subgroup of a maximal torus of $G$.  However, by a second theorem of Serre \cite[Th\'eor\`eme 10]{Serre},
the set of $\wp$ such that $\Frob_{\wp}\in X$ has density $0$.  
\end{proof}

We can now prove the main theorem.

\begin{proof}
A well-known theorem of Tate \cite{Tate} asserts that the existence of a non-trivial $\F_q$-morphism between abelian varieties over $\F_q$ is equivalent to the existence of a 
$\Frob_q$-stable morphism of their $\ell$-adic Tate modules.  By the easy direction of this result, the existence of a non-trivial morphism defined over $\bar\F_q$ implies the existence of a $\Frob_q^m$-stable morphism of
their Tate modules for some positive integer $m$.

By Proposition~\ref{positive},
the hypothesis of the theorem therefore implies that 
$$\dim \Hom(V_1,V_2)^{\Frob_\wp^m} > 0$$ 
for some prime $\wp$ for which $\Frob_\wp$ generates a Zariski-dense subgroup of a
maximal torus $T$ of $G$ and some positive integer $m$.  As $T$ is connected, $\Frob_\wp^m$ likewise generates a Zariski-dense subgroup of $T$.  Thus $\dim \Hom_T(V_1,V_2) > 0$.  By a theorem of Pink \cite[Corollary 5.11]{Pink}, the $G$-representations $V_1$ and $V_2$ are
minuscule.  Thus Proposition~\ref{minuscule} implies that $\dim \Hom_G(V_1,V_2) > 0$.  Finally, Faltings' proof of Tate's Conjecture \cite{Faltings} implies $\Hom_{\bar K}(A_1,A_2)$ is non-zero.
\end{proof}

\begin{rem}
One might ask whether there exists a non-trivial homomorphism $A_1\to A_2$ defined over $K$ itself if for a density one set of $\wp$ there exists a non-trivial $k_\wp$-homomorphism
${A_1}_\wp\to {A_2}_\wp$.  
D.~Prasad pointed out the following counterexample to us.  Let $E$ be an elliptic curve over $\Q$ which does not have complex multiplication.
Let $E_n$ denote the quadratic twist of $E$ by $n\in \Q^\times$.  Let $A_1 = E$, $A_2 = E_2\times E_3\times E_6$.
For every rational prime $p>3$, either $2$, $3$, or $6$ lies in ${\F_p^\times}^2$, so if $E$ has good reduction at $p$, the same is true for both $A_1$ and $A_2$, and there exists 
an $\F_p$-isomorphism from $(A_1)_p$ to least one of $(E_2)_p$, $(E_3)_p$, and $(E_6)_p$, and therefore a non-trivial $\F_p$-homomorphism to $(A_2)_p$.
On the other hand, there is no $\Q$-isogeny from $A_1$ to any one of $E_2$, $E_3$, or $E_6$, and therefore no non-trivial $\Q$-homomorphism to $A_2$.
\end{rem}

We can prove a stronger version of Theorem~\ref{main} in analogy with the theorem of C.~S.~Rajan \cite{Rajan}.

\begin{thm}
\label{improved}
Let $n$ be a positive integer.  If  $A_1$ and $A_2$ are abelian varieties of dimension $\le n$ over a number field $K$ and the set of primes $\wp$ of $K$ for which
there exists a non-trivial $\bar k_\wp$-morphism of abelian varieties  from  ${A_1}_\wp$ to ${A_2}_\wp$ has upper density $>1-\frac{e^{-6n^2}}{n!^{2n}}$, then there exists
a non-trivial $\bar K$-morphism of abelian varieties from $A_1$ to $A_2$.
\end{thm}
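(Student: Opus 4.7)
The plan is to retrace the proof of Theorem~\ref{main} and note that its density-one hypothesis was used at exactly one point; replacing it by upper density $>1-\epsilon$ for an appropriate $\epsilon=\epsilon(n)$ will yield the statement. Every other step of that proof---Tate's theorem on morphisms between abelian varieties over finite fields, Proposition~\ref{minuscule}, Pink's minusculeness theorem, and Faltings' isogeny theorem---requires only the existence of a single suitable prime, not a density-one set of them.

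The only essentially quantitative use of density inside the proof of Theorem~\ref{main} is Proposition~\ref{positive}: it produces a set $S_1$ of primes of natural density exactly $[G_{12}:G]^{-1}$ at which $A_1\times A_2$ has good reduction and $\Frob_\wp$ generates a Zariski-dense subgroup of a maximal torus of $G$ (by Chebotarev the condition $\Frob_\wp\in G$ has density $[G_{12}:G]^{-1}$, and the conjugation-stable exceptional subvariety $X\subset G$ contributes density zero). If the hypothesized set $S$ of primes has upper density $>1-\epsilon$ with $\epsilon<[G_{12}:G]^{-1}$, then an elementary inequality shows $S\cap S_1$ has positive upper density and is in particular infinite; picking any $\wp\in S\cap S_1$ and running the remainder of the proof of Theorem~\ref{main} verbatim produces the desired nontrivial $\bar K$-morphism $A_1\to A_2$.

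It therefore suffices to bound $[G_{12}:G]$ by a constant depending only on $n$. This is a known uniform bound: for an abelian variety $A$ of dimension $g$ over a number field, the algebraic envelope of the image of the $\ell$-adic Galois representation has identity component of index at most a constant $C(g)$ depending only on $g$, not on $A$, $K$, or $\ell$. This can be extracted from Serre's work on the $\ell$-adic representations attached to abelian varieties, and is closely related to the Silverberg--Zarhin type bounds on the field of definition of the endomorphism ring. Applied to $A_1\times A_2$, which has dimension at most $2n$, this gives $[G_{12}:G]\le C(2n)$, and we may take $\epsilon:=1/(2C(2n))$. The main obstacle is locating or reproving this uniform bound cleanly; once it is in hand, the argument is a routine quantitative refinement of the proof of Theorem~\ref{main}, and the dependence of $\epsilon$ on $n$ is explicit.
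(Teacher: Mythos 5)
Your reduction is exactly the one the paper makes: everything hinges on producing a bound, depending only on $n$, on the number of connected components of the algebraic monodromy group $G_{12}$ attached to $A_1\times A_2$, so that one may take $\epsilon < [G_{12}:G]^{-1}$ and still be guaranteed a prime $\wp$ with $\Frob_\wp$ generating a dense subgroup of a maximal torus of $G$; from there the rest of the proof of Theorem~\ref{main} needs only that single prime. That matches the paper word for word.

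Where you diverge---and where your proposal has a genuine gap that you yourself flag---is in how that bound is obtained. You appeal to a black-box uniform bound from ``Serre's work'' and ``Silverberg--Zarhin type bounds,'' but you do not produce a precise citation or an argument, and you acknowledge that ``the main obstacle is locating or reproving this uniform bound cleanly.'' The paper does not use any such external result; it proves a new, more general theorem: for every $n$ there is an $N$ such that any reductive subgroup $G\subset\GL_n$ over a characteristic-zero field with a Zariski-dense set of points whose characteristic polynomials have rational coefficients satisfies $|G/G^\circ|<N$. The key observation driving the paper's proof, and absent from your sketch, is the arithmetic input: if $|G/G^\circ|$ were too large, one could find a semisimple element $gt$ lying in a non-identity component with a character value that is a primitive $q$th root of unity for a large prime power $q$, but the eigenvalues of $gt$ lie in a field of degree $\le n!$ over $\Q$ (by rationality of the characteristic polynomial), which forbids such roots of unity. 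In the application to $G_{12}$, the density of Frobenius elements (whose characteristic polynomials are integral, hence rational) supplies exactly this Zariski-dense rational locus. So while your outline correctly identifies the pivot, the proposal as written stops short of the actual content; you would either need to supply a verified reference with a statement that literally bounds $[G_{12}:G]$ by a function of $\dim(A_1\times A_2)$ alone, or reproduce an argument of the kind the paper gives.
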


The only additional ingredient necessary to prove Theorem~\ref{improved} is an upper bound, depending only on $n$, on the number of components of $G_{12}$.
This is an immediate consequence of the following theorem.

\begin{thm}
Let $n$ be a positive integer, $F$ a field of characteristic $0$, and $G\subset \GL_n$ a reductive $F$-subgroup.
If the set of $\bar F$-points of $G$ consisting of matrices whose characteristic polynomials lie in $\Q[x]$ is Zariski-dense, then $|G/G^\circ| < e^{6n^2}n!^{2n}$.
\end{thm}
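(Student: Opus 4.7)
My plan is to bound $|G/G^\circ|$ by reducing to the normalizer of a maximal torus and then splitting the bound into two pieces: the permutation action on the weights of that torus, and the residual centralizer action.

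After replacing $F$ by $\bar F$ (which preserves the hypothesis), I fix a maximal torus $T \subset G^\circ$ and set $N := N_G(T)$. Since maximal tori of $G^\circ$ are $G^\circ$-conjugate, every coset of $G^\circ$ in $G$ contains an element normalizing $T$, so the natural map $N \to G/G^\circ$ is surjective with kernel $N_{G^\circ}(T)$. Hence $|G/G^\circ| \le [N:T] =: |\tilde W|$, and it suffices to bound $\tilde W$ in terms of $n$. Let $K := C_G(T)/T$ be the kernel of the conjugation action of $\tilde W$ on $T$. Since $T$ acts faithfully on $V = F^n$, the character lattice $X^*(T)$ is $\Q$-spanned by the $n$ weights of $T$ on $V$, and $\tilde W/K$ permutes this multiset of weights. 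This gives an embedding $\tilde W/K \hookrightarrow S_n$, so $[\tilde W:K] \le n!$, reducing the task to bounding $|K|$.

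To bound $|K|$, I would use the density hypothesis as follows. For each $w \in N_G(T)$, rational-char-poly density in $G$ implies that the set of $t \in T$ with $\chi_V(tw) \in \Q[x]$ is Zariski-dense in $T$. For $w$ representing an element of $K$ — so that $w$ preserves every weight space $V_\lambda$ — this condition says that for a Zariski-dense set of $t \in T$ the multiset $\{\lambda(t)\cdot\mu_{\lambda,j}\}$ is $\Gal(\bar\Q/\Q)$-stable, where the $\mu_{\lambda,j}$ are the eigenvalues of $w$ on $V_\lambda$. Specializing to $t \in T$ with $\lambda(t) \in \Q^\times$ for all $\lambda$ — such $t$ form a Zariski-dense subset, and rational scaling preserves Galois orbits — would force the $\mu_{\lambda,j}$ themselves to be roots of unity of order bounded in terms of $n$. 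This places $K$, via its natural embedding into $\prod_\lambda \GL(V_\lambda)(\bar\Q)$, as a finite subgroup of bounded exponent with rational-valued characters, and a Minkowski-style bound on such finite matrix groups finishes the argument.

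The main obstacle is this final step. Density of rational-char-poly elements in $G$ does not directly restrict to a density statement on the subvariety $C_G(T)$, and one must use the $T$-translates $tw$ together with the weight-space factorization of the characteristic polynomial to transfer rationality information from the full group $G$ to the finite group $K$. Making this argument uniform in $n$ is, I expect, the heart of the proof, and it is in spirit a simultaneous Jordan-Minkowski estimate powered by the combinatorics of the weight decomposition.
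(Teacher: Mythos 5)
Your opening reduction is sound and is in fact a clean alternative to the paper's: instead of your chain $|G/G^\circ| \le [N_G(T):T] \le n!\cdot|C_G(T)/T|$, the paper bounds the image of $G/G^\circ \to \Out(G^\circ)$ in terms of $n$ and is then left to bound the kernel $\Gamma_0$, which lives in a quotient of the centralizer $Z_G(G^\circ)$. Both routes correctly isolate a ``centralizing'' finite piece as the thing to control. But the heart of the theorem is the step you flag as ``the main obstacle,'' and it is indeed a genuine gap, not a technicality to be smoothed over. Zariski-density of the rational-characteristic-polynomial locus in $G$ gives density in each \emph{component} $G^\circ w$, but says nothing at all about the proper subvariety $Tw$: a dense subset of an irreducible variety can easily miss a proper closed subvariety entirely. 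So the sentence ``density in $G$ implies that the set of $t\in T$ with $\chi_V(tw)\in\Q[x]$ is Zariski-dense in $T$'' is simply false as stated, and everything downstream of it (the specialization to $t$ with $\lambda(t)\in\Q^\times$, the claim that the $\mu_{\lambda,j}$ are bounded-order roots of unity, the Minkowski finish) is unsupported. There is a second, independent problem with that downstream claim: an element $w$ of $C_G(T)$ with $w^{|K|}\in T$ has eigenvalues on $V_\lambda$ whose \emph{ratios} are $|K|$-th roots of unity, but the eigenvalues themselves need not be roots of unity at all, since $\lambda(w^{|K|})$ is an arbitrary element of $\bar F^\times$.

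The paper sidesteps the density issue by never restricting density to a thin coset. Having bounded $[G/G^\circ:\Gamma_0]$, it uses the short exact sequences $0\to Z\to Z_{G_0}(G^\circ)\to\Gamma_0\to 0$ and the splitting $Z^\circ\rtimes\tilde\Gamma_0\cong Z_{G_0}(G^\circ)$ to produce a \emph{finite} subgroup $\tilde\Gamma_0\subset\GL_n(F)$ surjecting onto $\Gamma_0$ with kernel of bounded order. Jordan's theorem plus the absence of large elementary abelian $p$-groups in $\GL_n(F)$ then shows: if $|G/G^\circ|$ is huge, $\Gamma_0$ contains an element $g$ centralizing $G^\circ$ whose image in $G/G^\circ$ has large prime-power order $q$. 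Density is applied only to the \emph{full component} $G^\circ g$, which is legitimate, to obtain a single semisimple element $gt$ ($t\in T$) with rational characteristic polynomial. Then the diagonalizable group $T'=\langle T,g\rangle$ sits in a maximal torus of $\GL_n$, and the inclusion of character lattices $\Lambda'\subset\Lambda$ produces a monomial character $\lambda$ of $\GL_1^n$ that kills $T$ and sends $g$ (hence $gt$) to a primitive $q$-th root of unity. Since the eigenvalues of $gt$ generate a field of degree $\le n!$ over $\Q$, this is impossible once $q$ (hence $N$) is large. So the missing ingredient in your write-up is precisely this pivot from ``bound the whole group $K$'' to ``find one element of large prime-power order and derive a contradiction from a single rational characteristic polynomial'' — without it, the rationality hypothesis never actually gets used.
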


We remark that without the rationality assumption, this statement fails even for $n=1$, where $G$ could be an arbitrarily large cyclic group.

\begin{proof}
The locus of $\bar F$-points of $G$ whose characteristic polynomials lie in $\Q[x]$ is $G_F$-stable, so the Zariski-closure does not change when the base field is changed from $F$ to $\bar F$.
This justifies assuming that $F$ is algebraically closed.

We can write $G^\circ = D Z^\circ$, where $D$ and $Z := Z(G^\circ)$ are the derived group and the center of $G^\circ$ respectively. 
By \cite[Corollary~2.14]{Springer}, the outer automorphism group of $D$ is contained in the automorphism group of the Dynkin diagram $\Delta$ of $D$.
Every automorphism of $\Delta$ preserves the set of isomorphic components.  We claim that $|\Aut \Delta| \le n!$.  It suffices to prove this when  $\Delta$ consists of $m$ mutually isomorphic connected diagrams $\Delta_0$ of rank $r = n/m$.  The claim obviously holds when $r=1$.  It is easily verified for $n\le 4$.  For $n\ge 5$, the classification of connected Dynkin diagrams
gives $|\Aut(\Delta_0)|^{2/r} \le \sqrt 6 < n/2$, so if $r\ge 2$,
$$|\Aut(\Delta)| = |\Aut(\Delta_0)|^{n/r} (n/r)! < (n/2)^{n/2}\lfloor n/2\rfloor! < n!.$$

Any automorphism of $G^\circ$ is determined by its restrictions to the characteristic subgroups $D$ and $Z^\circ$.
An automorphism which is inner on $D$ and trivial on $Z^\circ$ is inner.  Thus, the homomorphism $\Aut(G^\circ)\to \Aut(D)\times \Aut(Z^\circ)$ gives
an injective homomorphism 
$$\Out(G^\circ) \to \Out(D)\times \Aut(Z^\circ) = \Out(D)\times\GL_k(\Z),$$
where $k = \dim Z^\circ \le n$.  By Minkowski's theorem \cite[Theorem~9.1]{SerreBook}, every finite subgroup of $\GL_k(\Z)$ has order at most
$$M(k) := \prod_p p^{\sum_{i\ge 0} \bigl\lfloor\frac k{(p-1)p^i}\bigr\rfloor}.$$
We have
$$\log M(k) \le \sum_{p=2}^{k+1} \frac{k p\log p}{(p-1)^2} =  k\sum_{i=1}^{k} \frac{(i+1)\log (i+1)}{i^2} \le 2k^2,$$
since $(i+1)\log (i+1) \le 2i^2$ for all $i\ge 1$.
Thus, any finite subgroup of $\Out(G^\circ)$ has order $\le n! e^{2n^2}$.

The conjugation action on $G^\circ$ defines a homomorphism $G/G^\circ \to \Out(G^\circ)$.  Let $\Gamma_0$ denote the kernel of this homomorphism
 and $G_0$ the inverse image of $\Gamma_0$ in $G$.
Thus, the index of $\Gamma_0$ in the component group $G/G^\circ$ is $\le n! e^{2n^2}\le e^{3n^2}$.  Arguing by contradiction, we may assume
the order of $\Gamma_0$ is at least 
$$e^{-3n^2} |G/G^\circ|
\ge e^{3n^2}n!^{2n}.$$

Let $\Gamma := Z_{G_0}(G^\circ)/Z^\circ$, so $\Gamma_0\cong Z_{G_0}(G^\circ)/Z$ is a quotient group of $\Gamma$.
Consider the short exact sequence
$$0\to Z^\circ \to Z_{G_0}(G^\circ)\to \Gamma \to 0.$$
The extension class $\alpha\in H^2(\Gamma,Z^\circ)$ is annihilated by $N:=|\Gamma|$.  As $Z^\circ\cong (F^\times)^k$ is a divisible group,
it follows that the extension class $\alpha$ lies in the image of $H^2(\Gamma,Z^\circ[N])$, where $Z^\circ[N]$ denotes the kernel of the $N$th power map on $Z^\circ$.
We can therefore represent $\alpha$ by a $2$-cocycle with values in $Z^\circ[N]$.  This means that there exists a set-theoretic section $i\colon \Gamma\to  Z_{G_0}(G^\circ)$ such that the associated
$2$-cocycle takes values in $Z^\circ[N]$, and it follows that $\tilde \Gamma_0 := Z^\circ[N] i(\Gamma)$ is a finite subgroup of $Z_{G_0}(G^\circ)\subset G$ which maps onto $\Gamma$ and therefore onto $\Gamma_0$.

By Jordan's theorem, $\tilde \Gamma_0$ contains an abelian normal subgroup $\tilde A_0$ of index $\le J(n)$, a constant depending only on $n$.
The optimal Jordan constant has been computed by Michael Collins \cite{Collins}, and for all $n$, we have $J(n) \le e^{2n^2}$.  Indeed, for $n\ge 71$, the bound, $(n+1)!$, is given by Theorem A, and
$$(n+1)! < (n+1)^n < (n^2)^n < ((e^n)^2)^n = e^{2n^2}.$$
For $20\le n\le 70$ and $n\le 19$, the  bounds are given by Theorems B and D respectively, and they can be checked 
by machine to be less than $e^{2n^2}$ in every case.


Let $T$ be a  maximal torus of $G^\circ$, so $\tilde A_0 T$ is a commutative subgroup of $G_0$.  As 
$$\tilde A_0\cap T \subset \tilde A_0 \cap G^\circ = \ker \tilde A_0\to \Gamma_0,$$
we have
$$|\tilde A_0 T/T| = |\tilde A_0/(\tilde A_0\cap T)|\ge |\mathrm{Im}\,\tilde A_0\to \Gamma_0| \ge \frac{|\Gamma_0|}{e^{2n^2}} \ge e^{n^2}n!^{2n}.$$
Therefore, if $M:=e^n n!^2$, then $\tilde A_0 T$ has at least $M^n$ components.
Since $\tilde A_0 T/T$ is a quotient group of $\tilde A_0\subset \GL_n(F)$, it contains no elementary $p$-group of rank $>n$,
so it must have an element of order $\ge M$.  Let $g\in \tilde A_0$ map to such an element.

By hypothesis, there exists $t\in G^\circ\times\{g\}$ such that the characteristic polynomial of $gt$ has coefficients in $\Q$.
We can further assume that $t$ is semisimple, so we can choose our maximal torus $T$ to contain $t$.
Let $T' = \langle g\rangle T$.
Every element of $T'$ is the product of two commuting elements, one which is of finite order, and one which belongs to $T$, so both are semisimple,
from which it follows that their product is semisimple.
Thus $T'$ is diagonalizable, so it is a closed subgroup of a maximal torus of $\GL_n$ \cite[Proposition~8.4]{Borel}.
Without loss of generality, we may assume this maximal torus is the group $\GL_1^n$ of invertible diagonal matrices.

The contravariant functor taking an algebraic group to its character group gives an equivalence of categories between diagonalizable groups and finitely generated abelian groups \cite[Proposition~8.12]{Borel}.  In particular, there is a bijective correspondence between subgroups $\Lambda\subset \Z^n$ and closed subgroups $D_\Lambda$ of the group $\GL_1^n$ of diagonal matrices in $\GL_n$, where
$$D_\Lambda = \{(x_1,\ldots,x_n)\in \GL_1^n\mid \lambda(x_1,\ldots,x_n)=1\ \forall \lambda\in \Lambda\}.$$

Let $\Lambda$ be the subgroup of $\Z^n$ such that $D_\Lambda = T$ and $\Lambda'$ the subgroup such that $D_{\Lambda'} = T'$.
The inclusion $T\hookrightarrow T'$ corresponds to the surjection $\Z^n/\Lambda'\to \Z^n/\Lambda$ and thus to the inclusion $\Lambda'\subset \Lambda$.  
As $T'/T$ is cyclic, $\Lambda/\Lambda'$ is cyclic of the same order $k$.
Let $\lambda\in \Lambda$ map to a generator of $\Lambda/\Lambda'$.  Then the smallest integer $m$ such that $\lambda((gt)^m) = 1$ is the smallest such that $\lambda(g^m)=1$, which is $k$.

Writing $gt = (x_1,\ldots,x_n)\in \GL_1(F)^n\subset \GL_n(F)$, the $x_i$ are the eigenvalues of $gt$, so they all lie in some Galois extension of $\Q$ of degree $\le n!$.
Therefore $\lambda(gt)$ lies in this extension.  Since it is a primitive $k$th root of unity, this implies $\phi(k)\le n!$.  Now $\phi(q)\ge \sqrt q$ for all prime powers $q$ except $2$, and it follows
from the multiplicativity of $\phi$ that $\phi(k) \ge \sqrt{k/2}$ for all $k\ge 1$, so $M\le k\le 2n!^2$, which is a contradiction.

\end{proof}

\end{document}